\newcommand{\Sp}{\mathbb{S}}
\newcommand{\C}{\mathbb{C}}
\newcommand{\G}{\mathcal{G}}
\newcommand{\boxb}{\square_b}
\newcommand{\hpq}{\mathcal{H}_{p,q}(\Sp^{2n-1})}
\newtheorem{theorem}{Theorem}
\newtheorem{remark}{Remark}
\newtheorem{lemma}[theorem]{Lemma}
\newtheorem{proposition}[theorem]{Proposition}
\numberwithin{equation}{section}
\numberwithin{figure}{section}
\numberwithin{theorem}{section}
\title[Estimates for the Complex Green Operator on Spheres]{Sobolev and Schatten Estimates for the Complex Green Operator on Spheres}
\author{Elena Kim}
\address[Elena Kim]{Pomona College, Department of Mathematics, 610 N College Ave, Claremont, CA 91711}
\email{elena.kim@pomona.edu}
\author{W. Jacob Ogden}
\address[W. Jacob Ogden]{University of Minnesota, School of Mathematics, 206 Church Street SE,
Minneapolis, MN, 55455, USA}
\email{ogden048@umn.edu}
\author{Tommie Reerink}
\address[Tommie Reerink]{Massachusetts Institute of Technology, Green Hall, 350 Memorial Drive, Cambridge, MA 02139, USA}
\email{reerinkt@mit.edu}
\author{Yunus E. Zeytuncu}
\address[Yunus E. Zeytuncu]{University of Michigan--Dearborn, Department of Mathematics and Statistics, 2048 Evergreen Road, Dearborn, MI 48128, USA}
\email{zeytuncu@umich.edu}
\thanks{This work is supported by the NSF (DMS-1659203). The work of the last author is also partially supported by a grant from the Simons Foundation (\#353525).}
\begin{document}
\maketitle

\begin{abstract}

The complex Green operator $\mathcal{G}$ on CR manifolds is the inverse of the Kohn-Laplacian $\square_b$ on the orthogonal complement of its kernel. In this note, we prove Schatten and Sobolev estimates for $\mathcal{G}$ on the unit sphere $\mathbb{S}^{2n-1}\subset \mathbb{C}^n$. We obtain these estimates by using the spectrum of $\boxb$ and the asymptotics of the eigenvalues of the usual Laplace-Beltrami operator.
\end{abstract}

\section{Introduction}

\subsection{Background}The  unit  sphere $\Sp^{2n-1}$ in $\C^{n}$ is  a  CR  manifold of  hypersurface  type  with the  CR  structure  induced  from  the  ambient space.  The  tangential  Cauchy-Riemann  complex  with  the  operators $\overline{\partial}_b$ and $\overline{\partial}_b^*$ is defined on the  spaces  of  square integrable $(0,q)$-forms $L^2_{(0,q)}(\Sp^{2n-1})$.
The Kohn Laplacian, given by $$\square_b=\overline{\partial}_b\overline{\partial}_b^*+\overline{\partial}_b^*\overline{\partial}_b$$ is a self-adjoint, linear, densely defined, closed operator on $L^2_{(0,q)}(\Sp^{2n-1})$. Much like the Laplace-Beltrami operator on a Riemannian manifold, many geometric properties of CR manifolds can be studied by analyzing the properties of this differential operator. 
The inverse of $\boxb$ (defined on the orthogonal complement of the kernel of $\boxb$ in $L^2_{(0,q)}(\Sp^{2n-1})$) is called the complex Green operator, and denoted by $\mathcal{G}$.
We refer the reader to \cite{CS01} and \cite{Boggess91CR} for detailed definitions for these operators.

In this note, we obtain Sobolev and Schatten estimates
by using the eigenvalues of $\square_b$ on the sphere. The spectrum for any form level on the sphere was originally computed in \cite{Folland} by using unitary representations. A more direct computation by using spherical harmonics at the functions level can be seen in \cite{REU18}.

\subsection{Spherical Harmonics} 
We begin with a quick overview of spherical harmonics. A complex polynomial on $\C^{n}$ can be written as $$f(z,\overline{z})=\sum_{\alpha,\beta}c_{\alpha,\beta}z^\alpha\overline{z}^\beta$$ where $z\in\C^n$, each $c_{\alpha,\beta}\in\C$, and $\alpha,\beta\in\mathbb{N}^n$ are multiindices. By multiindices, we mean that $\alpha=(\alpha_1,\dots,\alpha_n),$ $z^\alpha=\prod_{j=1}^n z_j^{\alpha_j},$ and  $|\alpha|=\sum_{j=1}^n \alpha_j$. A polynomial
$f(z,\overline{z})$ is called homogeneous of bidegree $(p,q)$ if $f(\lambda_1 z,\lambda_2 \overline{z})=\lambda_1^p\lambda_2^qf(z,\overline{z})$ for all $z\neq0$ and $\lambda_i>0$. A twice-differentiable function $f$ is harmonic if $\Delta f=0$, where the Laplacian $\Delta$ on $\C^n$ is defined by $$\Delta f=4\sum_{j=1}^n\frac{\partial^2f}{\partial z_j\partial\overline{z}_j}.$$

The space of harmonic homogeneous polynomials of bidegree $p,q$ on $\C^n$ is denoted $\mathcal{H}_{p,q}(\C^n).$
A spherical harmonic is the restriction of a harmonic complex polynomial on $\C^n$ to $\Sp^{2n-1}$. It is well-known that any polynomial on $\C^n$ agrees with a harmonic polynomial on the sphere. 

The space $\mathcal{H}_{p,q}(\Sp^{2n-1})$ is the space of restrictions to $\Sp^{2n-1}$ of functions in $\mathcal{H}_{p,q}(\C^n).$ Since distinct harmonic polynomials on the ball cannot have the same boundary values, $\mathcal{H}_{p,q}(\C^n) \cong \mathcal{H}_{p,q}(\Sp^{2n-1}).$
Decomposing a function on $\Sp^{2n-1}$ into homogeneous spherical harmonics is analogous to writing the Fourier series decomposition of a function on the circle. The collection of spaces $\mathcal{H}_{p,q}(\Sp^{2n-1} )$ gives a decomposition of $L^2(\Sp^{2n-1})$ into mutually orthogonal subspaces.

\begin{theorem} The spaces $\mathcal{H}_{p,q}(\Sp^{2n-1})$ are pairwise orthogonal, and $$L^2(\Sp^{2n-1})=\bigoplus_{p,q=0}^\infty\mathcal{H}_{p,q}(\Sp^{2n-1}).$$
\end{theorem}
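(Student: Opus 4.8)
The plan is to split the statement into its two assertions---pairwise orthogonality of the spaces $\hpq$, and the fact that their sum exhausts $L^2(\Sp^{2n-1})$---and to prove each separately. For orthogonality, the key observation is that the map $(p,q)\mapsto(p+q,\,p-q)$ is injective on pairs of nonnegative integers, so whenever $(p,q)\neq(p',q')$ we must have either $p+q\neq p'+q'$ or $p-q\neq p'-q'$. I would therefore establish orthogonality in each of these two regimes separately, where the $L^2$ inner product is $\langle f,g\rangle=\int_{\Sp^{2n-1}}f\overline{g}\,d\sigma$ with $d\sigma$ the rotation-invariant surface measure.

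First I would handle the case $p-q\neq p'-q'$ using the circle action $z\mapsto e^{i\theta}z$ on the sphere. A polynomial $f$ homogeneous of bidegree $(p,q)$ satisfies $f(e^{i\theta}z,e^{-i\theta}\zbar)=e^{i(p-q)\theta}f(z,\zbar)$, so $f\in\hpq$ and $g\in\mathcal{H}_{p',q'}(\Sp^{2n-1})$ pick up factors $e^{i(p-q)\theta}$ and $e^{i(p'-q')\theta}$ under this rotation. Since $d\sigma$ is invariant under the rotation, the inner product is unchanged, which forces
\[
\langle f,g\rangle = e^{i\big((p-q)-(p'-q')\big)\theta}\,\langle f,g\rangle
\]
for all $\theta$, and hence $\langle f,g\rangle=0$. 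For the case $p+q\neq p'+q'$, I would invoke the classical real theory of spherical harmonics: since $\Delta=4\sum_j \partial^2/\partial z_j\partial\overline{z}_j$ agrees with the ordinary Laplacian on $\R^{2n}$, a harmonic polynomial homogeneous of bidegree $(p,q)$ is a genuine real harmonic polynomial homogeneous of total degree $p+q$, whose restriction to $\Sp^{2n-1}$ is an eigenfunction of $\LB$ with an eigenvalue depending only on---and strictly monotonic in---the total degree (concretely $-(p+q)(p+q+2n-2)$ in the geometer's sign convention). Distinct total degrees give distinct eigenvalues, so the corresponding eigenspaces, and in particular $\hpq$ and $\mathcal{H}_{p',q'}(\Sp^{2n-1})$, are orthogonal.

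For the direct sum, the plan is to show that finite linear combinations of elements of the $\hpq$ are dense in $L^2(\Sp^{2n-1})$. By the Stone--Weierstrass theorem, polynomials in $z$ and $\zbar$ are uniformly dense in $C(\Sp^{2n-1})$ and hence $L^2$-dense, so it suffices to show that the restriction of every monomial $z^\alpha\zbar^\beta$ lies in the span of the $\hpq$. This follows from the standard harmonic decomposition of homogeneous polynomials: every homogeneous polynomial of bidegree $(p,q)$ can be written as $\sum_{j\ge 0}|z|^{2j}h_{p-j,q-j}$ with each $h_{p-j,q-j}$ harmonic of bidegree $(p-j,q-j)$, which one proves inductively from the splitting $P_{p,q}=\mathcal{H}_{p,q}(\C^n)\oplus |z|^2 P_{p-1,q-1}$. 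Since $|z|^2\equiv 1$ on $\Sp^{2n-1}$, the restriction is a finite sum of spherical harmonics. Combined with the orthogonality above, this upgrades $L^2$-density of the algebraic sum to the orthogonal direct sum decomposition in the statement.

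I expect the main obstacle to be the completeness step, specifically justifying the harmonic decomposition $\sum_j|z|^{2j}h_{p-j,q-j}$ and verifying that the pieces carry the correct bidegree; the two orthogonality arguments reduce to a one-line symmetry computation and a citation of the $\LB$ eigenvalue formula. I would also take care that one is asserting a genuine orthogonal direct sum (not merely a dense algebraic span), which is exactly where combining the orthogonality and density arguments is essential.
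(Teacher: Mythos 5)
Your argument is correct, but note that the paper does not actually prove this theorem: it is stated as background and deferred entirely to the reference \cite{Axler13Harmonic}, so there is no internal proof to compare against. What you have written is essentially the standard textbook argument, adapted to the bigraded setting. Both halves check out: the injectivity of $(p,q)\mapsto(p+q,p-q)$ correctly splits orthogonality into the two cases; the circle-action computation $\langle f,g\rangle=e^{i((p-q)-(p'-q'))\theta}\langle f,g\rangle$ is valid because $z\mapsto e^{i\theta}z$ is a unitary of $\C^n$ preserving $d\sigma$; and the distinct-total-degree case via the $\LB$ eigenvalues $k(k+2n-2)$ (strictly increasing in $k$) is the classical orthogonality of spherical harmonics of different degrees. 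The completeness half (Stone--Weierstrass plus the decomposition $P_{p,q}=\mathcal{H}_{p,q}(\C^n)\oplus|z|^2P_{p-1,q-1}$, iterated until the bidegree hits the harmonic boundary cases $(p,0)$ or $(0,q)$, then restricting with $|z|^2\equiv 1$) is exactly how the real-graded version is proved in \cite{Axler13Harmonic}; the bigraded refinement you use appears in Rudin's book on the unit ball and in the cited thesis \cite{Klima}. The one load-bearing fact you assert rather than prove is that bigraded splitting itself; it does require an argument (surjectivity of $\Delta\colon P_{p,q}\to P_{p-1,q-1}$ plus a dimension count, or the Fischer inner product, or deducing it from the real splitting by decomposing under the circle action), but you correctly flag it as the main technical step and it is a standard result, so this is a citation-level gap of the same kind the paper itself makes, not an error. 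One small point worth making explicit: the finite-dimensionality (hence closedness) of each $\hpq$ is what lets you pass from a dense pairwise-orthogonal algebraic span to the Hilbert-space orthogonal direct sum.
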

\noindent We refer to \cite{Axler13Harmonic} for more on spherical harmonics and the proof of the last theorem.

A direct computation shows that $\hpq$ is an eigenspace for $\square_b$. We refer to \cite{Folland} and \cite{REU18} for the proof of the next theorem.

\begin{theorem}
The space $\hpq$ is an eigenspace for $\boxb$ with associated eigenvalue $2q(p+n-1).$ 
\end{theorem}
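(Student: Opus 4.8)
Since $\hpq$ consists of functions (i.e.\ $(0,0)$-forms), $\overline{\partial}_b^*$ annihilates it and $\boxb=\overline{\partial}_b^*\overline{\partial}_b$ there, so $\boxb$ is nonnegative on $\hpq$. My first goal is to see that $\boxb$ acts on $\hpq$ as a scalar, which already yields the eigenspace statement; the second is to identify that scalar. For the first point I would use symmetry. The unitary group $U(n)$ acts transitively on $\Sp^{2n-1}$ and preserves the induced CR structure, so the action commutes with $\overline{\partial}_b$ and hence with $\boxb$. The spaces $\hpq$ are irreducible $U(n)$-modules, pairwise inequivalent, and by the decomposition $L^2(\Sp^{2n-1})=\bigoplus_{p,q}\hpq$ each is an isotypic component. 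Since $\boxb$ is $U(n)$-equivariant it preserves each isotypic component $\hpq$, and by Schur's lemma it acts there as a scalar $\lambda_{p,q}$. Thus $\hpq$ is an eigenspace, and it remains only to compute $\lambda_{p,q}$.

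For the eigenvalue I would compute the Rayleigh quotient: for any nonzero $f\in\hpq$, $\lambda_{p,q}=\langle\boxb f,f\rangle/\|f\|^2=\|\overline{\partial}_b f\|^2/\|f\|^2$. Write $\overline{Z}=\sum_{k}\overline{z}_k\,\partial/\partial\overline{z}_k$, so that $\overline{Z}f=qf$ on bidegree $(p,q)$. With $\rho=|z|^2-1$, the form $\overline{\partial}_b f$ is the orthogonal projection of $\overline{\partial}f=\sum_j(\partial f/\partial\overline{z}_j)\,d\overline{z}_j$ off the complex normal direction $\overline{\partial}\rho=\sum_j z_j\,d\overline{z}_j$. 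Carrying out this pointwise projection in the metric induced from $\C^n$ (under which $\langle d\overline{z}_j,d\overline{z}_k\rangle=2\delta_{jk}$) gives $|\overline{\partial}_b f|^2=2\big(\sum_j|\partial f/\partial\overline{z}_j|^2-|\overline{Z}f|^2\big)=2\big(\sum_j|\partial f/\partial\overline{z}_j|^2-q^2|f|^2\big)$.

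The one remaining quantity is $\sum_j\int_{\Sp^{2n-1}}|\partial f/\partial\overline{z}_j|^2\,d\sigma$, which I would evaluate by the divergence theorem on the unit ball. Starting from the pointwise identity $\sum_j\frac{\partial}{\partial z_j}\big[(\partial f/\partial\overline{z}_j)\,\overline{f}\big]=\tfrac14(\Delta f)\overline{f}+\sum_j|\partial f/\partial\overline{z}_j|^2$, harmonicity of $f$ kills the first term; integrating over the ball $B$ and using $\int_B(\partial F/\partial z_j)\,dV=\tfrac12\int_{\Sp^{2n-1}}F\,\overline{z}_j\,d\sigma$ produces the boundary term $\tfrac12\int_{\Sp^{2n-1}}\overline{f}\,(\overline{Z}f)\,d\sigma=\tfrac{q}{2}\int_{\Sp^{2n-1}}|f|^2\,d\sigma$. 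Since $\sum_j|\partial f/\partial\overline{z}_j|^2$ is homogeneous of degree $2(p+q-1)$, rescaling converts its ball integral back to a sphere integral, and one finds $\sum_j\int_{\Sp^{2n-1}}|\partial f/\partial\overline{z}_j|^2\,d\sigma=q(p+q+n-1)\int_{\Sp^{2n-1}}|f|^2\,d\sigma$. Substituting, $\lambda_{p,q}=2\big(q(p+q+n-1)-q^2\big)=2q(p+n-1)$, as claimed.

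The main obstacle is bookkeeping rather than conceptual: getting the metric normalization on $(0,1)$-forms right (this is exactly where the factor $2$ in $2q(p+n-1)$ originates) and justifying the projection and divergence-theorem identities, including the homogeneity rescaling between ball and sphere. A more hands-on alternative, avoiding representation theory, is to write $\boxb$ explicitly as a second-order tangential operator in the fields $\overline{L}_j=\partial/\partial\overline{z}_j-z_j\overline{Z}$ and their conjugates $L_j$, check directly that it preserves $\hpq$, and collect terms using $\sum_j z_j\,\partial f/\partial z_j=pf$, $\overline{Z}f=qf$, and $\Delta f=0$; there the delicate step is computing the formal adjoint $\overline{\partial}_b^*$ on the sphere, namely the zeroth-order corrections coming from the surface measure together with the relation $\sum_j\overline{z}_j\overline{L}_j=0$, so that the constant emerges exactly.
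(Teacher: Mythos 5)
Your proof is correct, and it takes a genuinely different route from the paper, which in fact contains no proof of this theorem at all: it defers entirely to Folland's representation-theoretic computation \cite{Folland} and to the direct spherical-harmonics computation in \cite{REU18}. Your argument is a hybrid of those two. The first half (the $U(n)$-action commutes with $\boxb$; the spaces $\hpq$ are irreducible, pairwise inequivalent, hence isotypic components; Schur's lemma forces $\boxb$ to act as a scalar on each $\hpq$) is in the spirit of \cite{Folland}; be aware that the irreducibility and pairwise inequivalence of the $\hpq$ as $U(n)$-modules is a nontrivial classical fact you are importing wholesale, and it is where the real content of the eigenspace claim lives, so it deserves an explicit citation (it is in \cite{Folland}, or in Rudin's book on function theory in the unit ball). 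The second half is where you diverge from both references: instead of expanding $\boxb$ in the tangential fields $\overline{L}_j$ and letting it act on harmonic polynomials as in \cite{REU18}, you evaluate the Rayleigh quotient $\|\overline{\partial}_b f\|^2/\|f\|^2$, which needs only $\overline{\partial}_b$ (never its adjoint), the pointwise projection formula, and one integration by parts. I checked these steps and they are sound: the identity $\sum_j \frac{\partial}{\partial z_j}\bigl[(\partial f/\partial \overline{z}_j)\overline{f}\bigr] = \tfrac14(\Delta f)\overline{f} + \sum_j |\partial f/\partial \overline{z}_j|^2$, the boundary formula $\int_B (\partial F/\partial z_j)\,dV = \tfrac12 \int_{\Sp^{2n-1}} F\,\overline{z}_j\,d\sigma$, and the homogeneity factor $2(p+q+n-1)$ together give $\sum_j \int_{\Sp^{2n-1}} |\partial f/\partial \overline{z}_j|^2\,d\sigma = q(p+q+n-1)\int_{\Sp^{2n-1}}|f|^2\,d\sigma$, whence $\lambda_{p,q} = 2\bigl(q(p+q+n-1)-q^2\bigr) = 2q(p+n-1)$. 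Your normalization $\langle d\overline{z}_j, d\overline{z}_k\rangle = 2\delta_{jk}$ (the metric induced from $\C^n$) is precisely the one consistent with the paper's stated eigenvalue; with the other common convention $\langle d\overline{z}_j, d\overline{z}_k\rangle = \delta_{jk}$ you would obtain $q(p+n-1)$, so flagging this was warranted, and it is consonant with the paper's own remark that its constants depend on normalization conventions. In terms of trade-offs: your route avoids ever computing $\overline{\partial}_b^*$ or an explicit formula for $\boxb$, concentrating all the work in one divergence-theorem identity, at the price of representation-theoretic input; the computation in \cite{REU18} is longer but self-contained at the level of calculus, which is presumably why the paper cites it as the elementary reference.
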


In order to describe the spectrum of $\boxb$, it is also necessary to determine the multiplicity of each eigenvalue. In other words, we have to determine the dimension of the eigenspace $\hpq.$ An inclusion-exclusion principle argument gives the following result. See \cite{Axler13Harmonic} and \cite{Klima} for detailed proofs.

\begin{lemma}
For $p,q\geq1,$ $$ \dim (\mathcal{H}_{p,q}(\Sp^{2n-1}))=\frac{(n-1)(n+p+q-1)}{pq}\binom{n+p-2}{p-1}\binom{n+q-2}{q-1}.$$ Furthermore, 
$$ \dim (\mathcal{H}_{0,q} (\Sp^{2n-1} ))=  \binom{n+q-1}{q}.$$
\end{lemma}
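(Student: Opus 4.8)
The plan is to realize $\mathcal{H}_{p,q}(\C^n)$ as the kernel of the Laplacian acting between spaces of homogeneous polynomials, and then to count dimensions by rank-nullity. Let $\mathcal{P}_{p,q}$ denote the space of all homogeneous polynomials of bidegree $(p,q)$ on $\C^n$, that is, those of the form $\sum_{|\alpha|=p,\,|\beta|=q} c_{\alpha,\beta}z^\alpha\overline{z}^\beta$. A stars-and-bars count gives
$$\dim \mathcal{P}_{p,q} = \binom{n+p-1}{p}\binom{n+q-1}{q},$$
since the monomials $z^\alpha$ with $|\alpha|=p$ number $\binom{n+p-1}{p}$, and likewise for the $\overline{z}^\beta$. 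Because $\Delta = 4\sum_{j}\partial_{z_j}\partial_{\overline{z}_j}$ lowers each of the two degrees by one, it restricts to a linear map $\Delta\colon \mathcal{P}_{p,q}\to\mathcal{P}_{p-1,q-1}$ whose kernel is precisely $\mathcal{H}_{p,q}(\C^n)$.

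The crux of the argument, and the step I expect to be the main obstacle, is to show that this restriction is surjective. The cleanest route is to equip the polynomial ring with the Fischer inner product, under which multiplication by $z_j$ is the formal adjoint of $\partial_{z_j}$ and multiplication by $\overline{z}_j$ is the formal adjoint of $\partial_{\overline{z}_j}$. Under this pairing the adjoint of $\Delta$ is, up to the harmless constant $4$, multiplication by $|z|^2=\sum_j z_j\overline{z}_j$, which is manifestly injective on polynomials. Hence $\Delta$ has trivial cokernel and maps $\mathcal{P}_{p,q}$ onto $\mathcal{P}_{p-1,q-1}$.

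Granting surjectivity, rank-nullity yields the inclusion-exclusion identity
$$\dim \mathcal{H}_{p,q}(\C^n) = \dim\mathcal{P}_{p,q} - \dim\mathcal{P}_{p-1,q-1} = \binom{n+p-1}{p}\binom{n+q-1}{q} - \binom{n+p-2}{p-1}\binom{n+q-2}{q-1},$$
and only an elementary simplification remains. Factoring out $\binom{n+p-2}{p-1}\binom{n+q-2}{q-1}$ and using $\binom{n+p-1}{p}=\frac{n+p-1}{p}\binom{n+p-2}{p-1}$ together with its $q$-analogue, the difference becomes
$$\binom{n+p-2}{p-1}\binom{n+q-2}{q-1}\cdot\frac{(n+p-1)(n+q-1)-pq}{pq}.$$
A direct expansion shows $(n+p-1)(n+q-1)-pq=(n-1)(n+p+q-1)$, which gives the stated formula. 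Since $\mathcal{H}_{p,q}(\C^n)\cong\hpq$ by the isomorphism noted earlier, this proves the first claim.

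For the case $p=0$, every operator $\partial_{z_j}$ annihilates each monomial $\overline{z}^\beta$, so $\Delta$ vanishes identically on $\mathcal{P}_{0,q}$. Thus every polynomial of bidegree $(0,q)$ is already harmonic, and $\dim\mathcal{H}_{0,q}(\Sp^{2n-1})=\dim\mathcal{P}_{0,q}=\binom{n+q-1}{q}$, as claimed.
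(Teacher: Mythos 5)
Your proof is correct, and it is precisely the ``inclusion-exclusion principle argument'' that the paper itself only gestures at (the paper gives no proof, deferring to the cited references of Axler--Bourdon--Ramey and Klima): the identity $\dim \mathcal{H}_{p,q}(\C^n) = \dim\mathcal{P}_{p,q} - \dim\mathcal{P}_{p-1,q-1}$ is exactly that inclusion-exclusion, and your Fischer inner product argument correctly supplies the surjectivity of $\Delta\colon \mathcal{P}_{p,q}\to\mathcal{P}_{p-1,q-1}$ that makes it rigorous. The remaining algebra, including the identity $(n+p-1)(n+q-1)-pq=(n-1)(n+p+q-1)$ and the degenerate case $p=0$, checks out.
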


\subsection{Complex Green Operator}

Given a complete description of the spectrum of $\boxb$, it is simple to write down an explicit representation of $\boxb$ in terms of its spectrum. Let $\{e_\ell \}$ be an orthonormal basis for $(\ker \square_b)^\perp$ which consists of eigenfunctions of $\boxb$, $\boxb e_\ell = \lambda_\ell e_\ell$ for each $\ell.$ Then 
$\boxb f = \sum_{\ell} \langle f , e_\ell \rangle \lambda_\ell e_\ell $ whenever the right side converges in $L^2(\Sp^{2n-1}).$ 

 The complex Green operator $\G$ is a compact linear operator on $L^2( \Sp^{2n-1})$ (actually on any strictly pseudoconvex smooth CR manifold \cite{CS01}). If $f \in ( \ker \boxb)^\perp$, then $\G \boxb f = \boxb \G f=f$, where the left side of this identity is understood only formally.  Since the span of $\{e_\ell\}$ is assumed to be orthogonal to the kernel of $\boxb$, the eigenvalue $\lambda_\ell$ is nonzero for each $\ell$. Thus the complex Green operator, the linear operator $\G:L^2(\Sp^{2n-1} ) \to L^2 (\Sp^{2n-1} )$ defined by 
$$\G f =0 \text{ if } f \in \ker \square_b,$$ 
$$ \G f = \sum_\ell \frac{\langle f , e_\ell \rangle }{\lambda_\ell} e_\ell \text{ if } f \in (\ker \boxb ) ^\perp= \{ f \in L^2 ( \Sp^{2n-1}) \: : \: \langle f , g \rangle =0 \text{ for all } g \in \ker \boxb \},$$ is well-defined.

\subsection{Main Results}
The first result of this paper is a characterization of when the Schatten $r$-norm of $\G$ is finite. We prove that, on $\Sp^{2n-1}$, $\| \G \|_r < \infty$ if and only if $r>n$. Similar Schatten estimates for the $\overline{\partial}$-Neumann operator and Hankel operators recently appeared in \cite{Sah}. We present a proof of this statement in the second section.

In section $3$, we turn attention to the modified Poisson equation $\boxb u =f$. The complex Green operator is the solution operator for this equation; given $f \in (\ker  \boxb)^\perp$, $u=\G f + g$ is a weak solution to $\boxb u =f$, where $g \in \ker \boxb$, and $u=\G f$ is the canonical solution in the sense that it minimizes the $L^2$ norm over all solutions. It is natural to ask how many weak derivatives $\G f$ has in $L^2(\Sp^{2n-1} )$ when $f$ is assumed to be in $L^2(\Sp^{2n-1}).$ Kohn proved that the complex Green operator on a class of pseudoconvex CR manifolds satisfies the estimate 
$$ \| \G f \|_{s+1} \leq C \| f \| _{s } $$ for some $C$ depending only on the underlying manifold $M$, where $\| \cdot \| _s$ denotes the norm in the Sobolev space $H^s ( M)$ \cite{Kohn65}. We offer an elementary proof of this result for the complex Green operator for functions on $\Sp^{2n-1}$ by utilizing the explicit spectral representation. Using this method, we are also able to compute the best constants $C$ on the right hand side of the inequality.

\section{Schatten $r$-norms of $\G$}
As mentioned before, $\G$ is a compact linear operator on $L^2( \Sp^{2n-1})$.
As above, let $\{e_\ell\}$ be an orthonormal basis for $(\ker \boxb)^\perp$ consisting of eigenfunctions of $\boxb $ with associated eigenvalues $\lambda_\ell$ Then, for $f\in L^2(\Sp^{2n-1}),$ 
$$\mathcal{G}f = \sum_{\ell}\frac{\langle f, e_\ell \rangle }{\lambda_\ell} e_\ell.$$ 
 Note that $\mathcal{G}$ has the same eigenfunctions as $\boxb$ and that the eigenvalues of $\mathcal{G}$ are the reciprocals of those of $\boxb$. Thus, $\hpq$ is an eigenspace for $\mathcal{G}$ with the associated eigenvalue $\lambda_{p,q}=\frac{1}{2q(p+n-1)}$.  In this section we study the Schatten $r$-norms of $\mathcal{G}$. 

Let $T$ be a compact and positive semi-definite operator from a separable Hilbert space $H$ to itself. Then, for any $r \in [1, \infty )$, define the Schatten $r$-norm of $T$ by 
$$ \| T \| _r= \left ( \sum_{k=0}^\infty  \lambda_k (T)^r  \right ) ^\frac{1}{r}$$ 
where $\lambda_1 (T) \geq \lambda_2(T) \geq \dots \geq \lambda_k(T) \geq \dots \geq 0$ are the eigenvalues of $T$. An operator $T$ has finite Schatten $r$-norm for some $r<\infty $ only if $T$ is compact, so the Schatten norm quantifies the compactness of an operator. We refer to the references within \cite{Sah} for more general studies on the Schatten estimates on various operators.

The following theorem characterizes the values of $r$ such that $\| \G \| _r<\infty $ on $\Sp^{2n-1}.$ 

\begin{theorem} On $\Sp^{2n-1}$, $\| \mathcal{G} \| _r < \infty $ if and only if $r>n.$ \end{theorem}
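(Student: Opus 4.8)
The plan is to reduce the finiteness of $\| \G \|_r$ to the convergence of an explicit double series and then compare that series to one-dimensional $p$-series. Since $\G$ is positive and self-adjoint with eigenspaces $\hpq$, the eigenvalue $\lambda_{p,q} = \frac{1}{2q(p+n-1)}$ occurs with multiplicity $\dim(\hpq)$; the kernel of $\boxb$ is exactly $\bigoplus_p \mathcal{H}_{p,0}(\Sp^{2n-1})$, so the terms with $q=0$ are omitted. Hence
$$ \| \G \|_r^r = \sum_{q=1}^\infty \sum_{p=0}^\infty \dim(\hpq) \left( \frac{1}{2q(p+n-1)} \right)^r , $$
and the whole problem becomes determining for which $r$ this converges.

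For the ``only if'' direction I would show divergence when $r \le n$ by restricting to the single slice $p=0$. Using $\dim(\mathcal{H}_{0,q}(\Sp^{2n-1})) = \binom{n+q-1}{q}$, which grows like $q^{n-1}/(n-1)!$, together with $\lambda_{0,q} = \frac{1}{2q(n-1)}$, the $p=0$ summand is comparable to $q^{n-1-r}$. The series $\sum_q q^{n-1-r}$ diverges precisely when $n-1-r \ge -1$, i.e. when $r \le n$. Since all terms are nonnegative, a single divergent slice forces the whole sum to diverge, giving $\| \G \|_r = \infty$ for $r \le n$.

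For the ``if'' direction I would prove convergence when $r > n$ by bounding the full double sum from above. The key estimate is the two-sided bound $\binom{n+p-2}{p-1} = \binom{n+p-2}{n-1} \asymp p^{n-1}$ for $p \ge 1$, with constants depending only on $n$, and likewise in $q$, together with $p+n-1 \asymp p$. Substituting into the dimension formula from the Lemma gives $\dim(\hpq) \asymp (p+q)\, p^{n-2} q^{n-2}$, so the summand over $p,q\ge 1$ is comparable to $(p+q)\, p^{n-2-r} q^{n-2-r}$. Splitting $(p+q)=p+q$ decouples the variables:
$$ \sum_{p,q \ge 1} (p+q)\, p^{n-2-r} q^{n-2-r} = \Big( \sum_{p\ge 1} p^{n-1-r} \Big)\Big( \sum_{q \ge 1} q^{n-2-r}\Big) + \Big( \sum_{p \ge 1} p^{n-2-r}\Big)\Big(\sum_{q\ge1} q^{n-1-r}\Big). $$
Each factor is a $p$-series: $\sum p^{n-1-r}$ converges iff $r>n$ and $\sum p^{n-2-r}$ converges iff $r>n-1$, so each of the two products converges exactly when $r>n$. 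Adding back the $p=0$ slice, which converges for $r>n$ by the previous paragraph, shows $\| \G \|_r < \infty$ for $r>n$ and completes the characterization.

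The main obstacle is not conceptual but bookkeeping: one must establish clean uniform two-sided bounds on $\dim(\hpq)$ valid for all $(p,q)$ with $p,q\ge1$, and separately treat the boundary slice $p=0$, so that the heuristic factorization is actually rigorous. The coupling between $p$ and $q$ enters only through the harmless factor $(p+q)$, which the splitting $p+q$ removes; after that the convergence threshold is dictated entirely by the slowest-decaying one-dimensional slice, which is why the answer $r>n$ coincides with the exponent appearing in the $p=0$ series.
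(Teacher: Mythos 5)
Your proposal is correct. It starts from the same reduction as the paper---$\|\G\|_r^r=\sum_{q\ge1}\sum_{p\ge0}\dim(\hpq)\,\lambda_{p,q}^r$ with $\lambda_{p,q}=\frac{1}{2q(p+n-1)}$---but settles both directions by a genuinely different and more economical route. For divergence, the paper bounds the multiplicities and eigenvalues from below over the full range $p\ge n$, $q\ge 1$ and compares with a two-dimensional integral; you instead observe that the single slice $p=0$ already diverges when $r\le n$, since $\dim(\mathcal{H}_{0,q}(\Sp^{2n-1}))\asymp q^{n-1}$ and $\lambda_{0,q}\asymp q^{-1}$ make that slice comparable to $\sum_q q^{n-1-r}$, and nonnegativity lets one slice decide. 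For convergence, the paper bounds the summand from above and invokes the integral test---which carries a monotonicity caveat ($r>n-1$) and an explicit double-integral computation---whereas you avoid integrals altogether: the two-sided bounds $\dim(\hpq)\asymp(p+q)\,p^{n-2}q^{n-2}$ and $\lambda_{p,q}\asymp(pq)^{-1}$ for $p,q\ge1$, together with splitting the factor $p+q$ into two terms, factor the double sum into products of one-dimensional $p$-series whose thresholds ($r>n$ and $r>n-1$) are immediate. The trade-off: the paper's integral computation also yields the quantitative approximation of $\|\G\|_r$ recorded after its theorem (the behavior as $r\to n^+$ and $r\to\infty$), which your factorization does not produce as a byproduct; yours is shorter, needs no monotonicity checks, and isolates divergence in a one-dimensional series. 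The bookkeeping you flag is genuinely all that is missing, and it is routine: the two-sided bound follows from $p^{n-1}\le p(p+1)\cdots(p+n-2)\le\bigl((n-1)p\bigr)^{n-1}$ for $p\ge1$ applied to $\binom{n+p-2}{p-1}=\binom{n+p-2}{n-1}$, the analogous bounds in $q$, and $p+q\le n+p+q-1\le n(p+q)$.
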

\begin{proof} By definition, 
$$ \| \mathcal{G} \| _r =\left (  \sum_{k=1}^\infty \lambda_k(\mathcal{G})^r \right ) ^{\frac1r}$$ where $\lambda_1(\mathcal{G}) \geq \dots\geq  \lambda_k(\mathcal{G}) \geq \dots.$ Combining eigenvalues which are the same, this can be rewritten as 
$$ \| \mathcal{G} \| _r^r = \sum_{k=1}^\infty m_k \lambda_k(\mathcal{G})^r $$ where $\lambda_1(\mathcal{G}) > \dots> \lambda_k(\mathcal{G}) >\dots>0$, and $m_k$ is the 
multiplicity of $\lambda_k$. The eigenvalues of $\G$ are $\lambda_{p,q}( {\G})= \lambda_{p,q}  = \frac{1}{2q(p+n-1)}$ 
with multiplicity 
\begin{equation*}
\begin{split}
m_{p,q} &= \frac{ (n-1) ( n+p+q-1)}{pq} \binom{p+n-2}{p-1} \binom{q+n-2}{q-1}\\
&= \frac{ ( n+p+q-1)}{(n-1)!(n-2)!}(p+n-2) \cdots (p+1)(q+n-2)  \cdots (q+1)   
\end{split}
\end{equation*}
(the latter formula holds even when $p=0$). 
Indexing the sum with $p$ and $q$, we have 
$$ \| \G \| _r^r = \sum_{q=1}^\infty \sum_{p=0}^\infty \frac{ m_{p,q} }{(2 q ( p+n-1))^r} .$$
Clearly $$m_{p,q} \leq \frac{ (n+p+q-1) }{(n-1)! (n-2)!} (p+n-2)^{n-2} (q+n-2)^{n-2} ,$$ and $$\lambda_{p,q} < \frac{ 1}{2 pq } $$ when $p>0$. 
Therefore 
$$ \| \G \| _r^r \leq \sum_{q=1}^\infty \left ( \frac{ (q+n-1)^{n-1} }{(2q (n-1))^r ( n-1)!}+ \sum_{p=1}^\infty \frac{( n+p+q-1)(p+n-2)^{n-2} (q+n-2)^{n-2} }{(2pq)^r (n-1)!(n-2)! }\right) .$$
By the elementary integral test (all the sequences of terms are positive and decreasing if $r$ is assumed to be greater than $n-1$), the convergence of this sum is equivalent to the convergence of the integral 
$$ \int _1^\infty \int_1^\infty \frac{( n+p+q-1)(p+n-2)^{n-2} (q+n-2)^{n-2} }{(2pq)^r (n-1)!(n-2)! } \: dp \: dq $$$$+ \int_1^\infty  \frac{ (q+n-1)^{n-1} }{(2q (n-1))^r ( n-1)!} \: dq.$$ The second term is a single integral, so it is easy to see that it converges if and only if $r>n$. Thus we may restrict our attention to the double integral. The convergence will be decided by the terms of highest total degree in the numerator. These terms are $p^{n-1} q^{n-2}$ and $p^{n-2} q^{n-1}$. Since $n$ is fixed, and all other terms in the numerator have lower degree in $p$ and $q$, it suffices to determine the convergence of the integral 
$$ \int_1^\infty \int_1^\infty \frac{ p^{n-1} q^{n-2} + p^{n-2} q^{n-1} }{p^r q^r } \: dp \: dq   .$$ 
The integral can be rewritten as 
$$\int_1^\infty \int_1^\infty \frac{ p^{n-1} q^{n-2} + p^{n-2} q^{n-1} }{p^r q^r } \: dp \: dq = 
\int_1^\infty \frac{ 1}{q^{r-n+2} } \int _1^\infty \frac{ p^{n-1} + q p^{n-2} }{p^r}\: dp \: dq .$$ If $r>n$, then the integral with respect to $p$ converges and 
$$ \int _1^\infty \frac{ 1}{p^{r-n+1}} + \frac{ q}{p^{r-n+2}} \: dp = \frac{ 1}{r-n} + \frac{q}{r-n+1}.$$ Now, the integral with respect to $q$ converges if and only if $r>n$. This shows that if $r>n$, $\| \G \| _r < \infty$. 

It remains to show that if $r \leq n$, then $\| \G \| _r = \infty$. We will show this by estimating $\| \G \| _r $ from below. We have 
$$ m_{p,q} \geq \frac{ (p+q ) p^{n-2} q^{n-2} }{(n-1)! (n-2)!} ,$$ and 
$$ \lambda_{p,q} \geq \begin{cases} \frac{1}{4 nq } & p < n \\ \frac{1} { 4 pq } & p\geq n .\end {cases} $$
Therefore 
$$ \| \G \| _r^r \geq \sum_{q=1}^\infty \sum_{p=n}^\infty  \frac{ (p+q ) p^{n-2} q^{n-2} }{(4pq)^r (n-1)! (n-2)!}.$$
The convergence of this sum is equivalent to the convergence of the integral 
$$\int_1^\infty \int_n^\infty \frac{ p^{n-1} q^{n-2} + p^{n-2} q^{n-1} }{p^r q^r } \: dp \: dq ,$$ which is the same integral as before except for the limits, so this shows that $\| \G \|_r = \infty $ if $r\leq n$. 
\end{proof}

The argument above gives a rough estimate of the size of the $\|\G \| _r $. A reasonable approximation is given by the integration estimates above. Indeed, 
$$ \| \G \|_r^r \simeq 
\frac{ 1}{(n-1)!(n-2)!}\int_1^\infty \bigg ( \int_0^n \frac{(p+q)p^{n-2}q^{n-2}}{(4nq)^r} \:dp $$$$+\int_n^\infty \frac{(p+q)p^{n-2}q^{n-2}}{(4pq)^r} \:dp \bigg )\: dq+ \frac{n}{(2n-2)^r}$$
$$=\frac{r}{4^r (r-n)(r-n+1)n^{r-n}(n-1)(n-1)!(n-2)!}+ \frac{n}{(2n-2)^r}.$$
It can be checked that this approximation at least captures the behavior of $\|G\|_r$ as $r \to n^+$ and as $r \to \infty$.

\section{Sobolev Estimates for $\G$ on Spheres.}
In this section we consider Sobolev estimates for the complex Green operator on the sphere $\Sp^{2n-1}$. The main question at hand is, given $f \in L^2 ( \Sp^{2n-1})$, how many weak derivatives does $\G f $ have in $L^2(\Sp^{2n-1})$? This is a natural question when considering $\G$ as the solution operator for the partial differential equation $\boxb u =f$. 

Sobolev estimates for the complex Green operator were first established by Kohn, who proved that when $M$ satisfies certain pseudoconvexity conditions, the complex Green operator acting on the space of square-integrable $(p,q)$-forms on $M$ gains one weak derivative, see \cite{Kohn65, CS01}.

The main result of this section is to offer a new proof of this estimate in the case of functions on the sphere and to extract some additional information by taking advantage of an explicit representation of the Sobolev norms in this setting. 

Let $\Delta_{\Sp^{2n-1}}$ be the usual Laplace-Beltrami operator on $\Sp^{2n-1}$. To avoid confusion, we consider $\Delta_{\Sp^{2n-1}} $ as a positive operator. The Laplace-Beltrami operator is a self-adjoint operator defined on a dense subspace of $L^2( \Sp^{2n-1})$. Just as with $\boxb$, we can easily write down a formula for $\Delta_{\Sp^{2n-1}} $ given a description of its eigenvalues and eigenfunctions. The eigenspaces of $\Delta_{\Sp^{2n-1}}$ are the spaces of homogeneous spherical harmonics.  

\begin{theorem}The space $\mathcal{H}_k ( \Sp^{2n-1} ) = \bigoplus_{p+q=k } \hpq $ is an eigenspace for $\Delta_{\Sp^{2n-1}}$ with associated eigenvalue $k(k + 2n-2)$. 
\end{theorem}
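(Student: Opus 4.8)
The plan is to recover the eigenvalue by relating the spherical operator $\LB$ to the flat Laplacian $\Delta$ on $\C^n=\R^{2n}$ through the standard polar-coordinate decomposition. Writing a point of $\R^{2n}$ as $x=\rho\omega$ with $\rho=|x|$ and $\omega\in\Sp^{2n-1}$, the Euclidean Laplacian separates as
$$\Delta = \frac{\partial^2}{\partial\rho^2} + \frac{2n-1}{\rho}\frac{\partial}{\partial\rho} - \frac{1}{\rho^2}\LB,$$
where the minus sign in front of $\LB$ reflects the convention (fixed just above in the excerpt) that $\LB$ is a positive operator. I would first record this identity; deriving it from scratch is the one genuinely computational step, though it is entirely standard and could alternatively be cited.

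Next I would exploit homogeneity. Any element of $\mathcal{H}_k(\Sp^{2n-1})$ is the restriction to the sphere of a harmonic polynomial $f$ on $\C^n$ that is homogeneous of total degree $k=p+q$: each $\hpq$-summand is homogeneous of bidegree $(p,q)$, hence under the real scaling $z\mapsto\rho z$ it is homogeneous of degree $p+q$, and a sum of such pieces all with $p+q=k$ is homogeneous of degree $k$ and harmonic. Homogeneity gives $f(\rho\omega)=\rho^k Y(\omega)$ with $Y=f|_{\Sp^{2n-1}}$. Substituting $f=\rho^k Y$ into the decomposition and using $\partial_\rho(\rho^k)=k\rho^{k-1}$ and $\partial_\rho^2(\rho^k)=k(k-1)\rho^{k-2}$, the harmonicity $\Delta f=0$ becomes
$$\rho^{k-2}\Big( k(k-1)+(2n-1)k \Big)Y - \rho^{k-2}\,\LB Y = 0.$$
Dividing by $\rho^{k-2}$ and simplifying $k(k-1)+(2n-1)k=k(k+2n-2)$ yields $\LB Y=k(k+2n-2)Y$, the claimed eigenvalue. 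Since this computation depends only on the total degree $k$ and not on the bidegree, every summand $\hpq$ with $p+q=k$ contributes the same eigenvalue, so all of $\mathcal{H}_k(\Sp^{2n-1})$ lies in the $k(k+2n-2)$-eigenspace.

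Finally, to see that $\mathcal{H}_k(\Sp^{2n-1})$ is the \emph{entire} eigenspace rather than merely contained in one, I would note that $k\mapsto k(k+2n-2)$ is strictly increasing on $\{0,1,2,\dots\}$ for $n\geq 1$, so distinct $k$ give distinct eigenvalues. Combined with the orthogonal decomposition $L^2(\Sp^{2n-1})=\bigoplus_{p,q}\hpq=\bigoplus_k\mathcal{H}_k(\Sp^{2n-1})$ from the first theorem, this forces each $\mathcal{H}_k(\Sp^{2n-1})$ to coincide with the full eigenspace for $k(k+2n-2)$. The main obstacle is purely bookkeeping: pinning down the sign and the coefficient $2n-1$ in the polar decomposition under the positive-operator convention. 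Once that identity is in hand, the eigenvalue drops out immediately.
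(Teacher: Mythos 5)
Your proof is correct, but it is worth noting that the paper does not actually prove this theorem: it states it and refers to \cite{Stein} for the proof. Your argument --- the polar-coordinate decomposition of the Euclidean Laplacian, homogeneity of degree $k=p+q$, and harmonicity --- is exactly the standard argument the citation points to, so in substance you have supplied the omitted proof rather than found a different route. Your bookkeeping is right: with the paper's positive-operator convention for $\LB$ and real dimension $2n$, the decomposition is indeed $\Delta = \partial_\rho^2 + \frac{2n-1}{\rho}\partial_\rho - \frac{1}{\rho^2}\LB$, and since the paper's complex Laplacian $4\sum_j \partial^2/\partial z_j\partial\zbar_j$ coincides with the real Laplacian on $\R^{2n}$, harmonic in the paper's sense is harmonic in the sense your computation needs. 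One remark on your final step: the maximality argument (using that $k\mapsto k(k+2n-2)$ is strictly increasing together with $L^2(\Sp^{2n-1})=\bigoplus_k \mathcal{H}_k(\Sp^{2n-1})$ to conclude $\mathcal{H}_k(\Sp^{2n-1})$ is the \emph{full} eigenspace) is correct, but it proves more than the paper's usage of ``eigenspace'' requires. In the analogous statement for $\boxb$, the spaces $\hpq$ are also called eigenspaces even though distinct bidegrees can share an eigenvalue --- for instance $\mathcal{H}_{n-1,1}$ and $\mathcal{H}_{0,2}$ both have $\boxb$-eigenvalue $4(n-1)$ --- so there ``eigenspace'' can only mean a subspace on which the operator acts by the given scalar. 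Your stronger conclusion is true for $\LB$ and does no harm; it is simply extra.
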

We refer to \cite{Stein} for the proof of this statement and details on the Laplace-Beltrami operator on spheres. In particular, this theorem implies that every eigenfunction of $\boxb$ is also an eigenfunction of $\Delta_{\Sp^{2n-1} }$. 
Given this description of the spectrum of $\Delta_{\Sp^{2n-1}}$, one can define the operator $(I + \Delta_{\Sp^{2n-1}} )^t $ for any real $t$. Let $\{ e _ \ell\}$ be an orthonormal basis for $L^2( \Sp^{2n-1} ) $ consisting of eigenfunctions of $\Delta_{\Sp^{2n-1} } $ with $\Delta_{\Sp^{2n-1} }e_\ell = \mu _ \ell e _ \ell.$ Then 
$$ (I + \Delta_{\Sp^{2n-1}} )^t f = \sum_{\ell } \langle f , e_\ell \rangle ( 1 + \mu_ \ell )^t e_ \ell$$
whenever the right side converges in $L^2( \Sp^{2n-1} ) $.

The Sobolev space $H^s( \Sp^{2n-1} )$ consisting of functions in $L^2 ( \Sp^{2n-1} ) $ with weak derivatives of order $s$ in $L^2( \Sp^{2n-1} ) $ can be characterized as the space of functions $f$ for which $(I + \Delta_{\Sp^{2n-1}} )^{\frac s 2 } f \in L^2 ( \Sp^{2n-1})$ \cite{Stein}. The norm on $H^s( \Sp^{2n-1} ) $ is defined by 
$$ \| f \| _{ s} = \| ( I + \Delta_{\Sp^{2n-1}} )^\frac{s}{2} f \| _{L^2 } =\left (  \sum_{\ell} |\langle  f, e_\ell \rangle|^2 ( 1 + \mu_\ell )^s \right )^\frac{1}{2}.$$
This formula makes sense for real $s$.

For the remainder of the paper, we assume that $\{ e_ \ell \}$ is an orthonormal basis for $( \ker \boxb )^\perp$ which consists of eigenfunctions of $\boxb$ with associated eigenvalues $\lambda_\ell$. Thus $e_\ell$ is also an eigenfunction of $\Delta_{\Sp^{2n-1}} $ with eigenvalue $\mu_\ell$. Then, for $f \in L^2 ( \Sp^{2n-1} ) $,
$$ \| \G f \| _s^2 = \sum_\ell \frac{ |\langle f, e_ \ell \rangle |^2 }{ \lambda_\ell^2} ( 1 + \mu_\ell )^s.$$ 
The problem is to determine for which $s$ there exists a constant $C$, not depending on $f$, such that 
$$ \| \G f \|_s \leq C \| f \| _{L^2} $$ for all $f \in L^2 ( \Sp^{2n-1} )$, or more generally, for which $s,t$ there exists $C$ such that 
$$ \| \G f \|_{s+t}  \leq C \| f \| _ t$$
for all $f \in H^t( \Sp^{2n-1} ) .$ 

\begin{lemma} There exists a constant $C$ such that $\|\mathcal{G}f\|_s^2 \leq C \|f\|_{L^2}^2$ if and only if $\left \{\frac{(1+\mu_\ell)^{\frac s 2}}{\lambda_\ell}\right\}$ is bounded.  \end{lemma}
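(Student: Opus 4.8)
The plan is to recognize that, in the eigenbasis $\{e_\ell\}$, the map $f \mapsto \mathcal{G}f$ composed with the $H^s$-norm acts diagonally, so the claimed equivalence is nothing more than the standard fact that a diagonal multiplier is bounded $L^2 \to L^2$ precisely when its multiplier sequence is bounded. Concretely, I would abbreviate $c_\ell = (1+\mu_\ell)^{s/2}/\lambda_\ell$ and start from the displayed identity preceding the lemma, which reads $\|\mathcal{G}f\|_s^2 = \sum_\ell |\langle f, e_\ell\rangle|^2 (1+\mu_\ell)^s / \lambda_\ell^2 = \sum_\ell c_\ell^2\, |\langle f, e_\ell\rangle|^2$. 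Since $\{e_\ell\}$ is an orthonormal basis of $(\ker\boxb)^\perp$ rather than of all of $L^2(\Sp^{2n-1})$, I would record at the outset that $\sum_\ell |\langle f, e_\ell\rangle|^2 \leq \|f\|_{L^2}^2$, with equality when $f \in (\ker\boxb)^\perp$; because $\mathcal{G}$ annihilates $\ker\boxb$, this discrepancy is harmless.

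For the \emph{if} direction, I would assume $\{c_\ell\}$ is bounded, say $|c_\ell| \leq M$ for all $\ell$, and simply factor the bound out of the sum: $\|\mathcal{G}f\|_s^2 = \sum_\ell c_\ell^2\, |\langle f, e_\ell\rangle|^2 \leq M^2 \sum_\ell |\langle f, e_\ell\rangle|^2 \leq M^2 \|f\|_{L^2}^2$, so that $C = M^2$ works.

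For the \emph{only if} direction, I would assume the estimate holds with some constant $C$ and test it against the individual basis elements. Taking $f = e_{\ell_0}$, orthonormality gives $\mathcal{G}e_{\ell_0} = \lambda_{\ell_0}^{-1} e_{\ell_0}$ and $\|e_{\ell_0}\|_{L^2} = 1$, whence $\|\mathcal{G}e_{\ell_0}\|_s^2 = (1+\mu_{\ell_0})^s/\lambda_{\ell_0}^2 = c_{\ell_0}^2$, and the hypothesis forces $c_{\ell_0}^2 \leq C$. Since $\ell_0$ is arbitrary, $\{c_\ell\}$ is bounded by $\sqrt{C}$; this computation in fact identifies the optimal constant as $C = \sup_\ell c_\ell^2$, which is useful for the refinements in the rest of the section.

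I do not anticipate a genuine obstacle here: the only point requiring care is the bookkeeping around $\ker\boxb$, namely that the $e_\ell$ span only $(\ker\boxb)^\perp$, and verifying that this weakens the inequality $\sum_\ell |\langle f, e_\ell\rangle|^2 \leq \|f\|_{L^2}^2$ in the harmless direction. The substance of the lemma is the clean reduction to a diagonal multiplier, after which both implications follow from one-line estimates.
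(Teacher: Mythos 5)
Your proposal is correct and follows essentially the same route as the paper: factor out the bound $\sup_\ell c_\ell^2$ for the forward direction, and test the estimate on individual eigenfunctions $f = e_\ell$ for the converse (the paper phrases this contrapositively, via unboundedness, but the argument is identical). Your explicit handling of Bessel's inequality $\sum_\ell |\langle f, e_\ell\rangle|^2 \leq \|f\|_{L^2}^2$ for general $f \in L^2(\Sp^{2n-1})$ is in fact slightly more careful than the paper, which writes this as an equality.
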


\begin{proof} 

Suppose $\left\{\frac{(1+\mu_\ell)^{\frac s 2}}{\lambda_\ell} \right\}$ is bounded. Then there exists $C>0$ such that $\frac{(1+\mu_\ell)^{\frac s 2}}{\lambda_\ell} < \sqrt C$ for all $\ell$. Therefore $ \| \mathcal{G} f \| _{s}^2 =\sum_\ell | \langle f, e_\ell\rangle | ^2  \frac{( 1+ \mu_\ell )^s}{\lambda_\ell^2}\leq C\sum_\ell | \langle f, e_\ell\rangle | ^2 = C \| f \|_{L^2}^2.  $

Conversely, if $\left\{\frac{(1+\mu_\ell)^{\frac s 2}}{\lambda_\ell} \right\} $ is unbounded, then for any $C > 0$, there exists $\ell$ such that $\frac{(1+\mu_\ell)^{ s }}{\lambda_\ell^2} > C$. Let $f=e_\ell$. Then $\| \mathcal{G} f\| _s^2 = \frac{ (1+\mu_\ell)^s}{\lambda_\ell^2 } > C= C \| f \| _{L^2}^2. $
 \end{proof} 
 
 The same argument shows that for any $t$, there exists $C$ such that $ \| \G f \|_{s+t}  \leq C \| f \| _ t$ if and only if $\left \{\frac{(1+\mu_\ell)^{\frac s 2}}{\lambda_\ell} \right \}$ is bounded. Thus it suffices to determine when this sequence of coefficients is bounded.

\begin{proposition} The sequence $\left \{\frac{(1+\mu_\ell)^{\frac s 2}}{\lambda_\ell} \right \}$ is bounded if and only if $s \leq 1.$ \end{proposition}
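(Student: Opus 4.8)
The plan is to read off the two relevant eigenvalues explicitly and reduce the problem to a boundedness question for an elementary rational expression in two integer variables. Each basis element $e_\ell$ belongs to some $\hpq$ with $q \geq 1$ (as $e_\ell \in (\ker \boxb)^\perp$), and on this space the $\boxb$-eigenvalue is $\lambda_\ell = 2q(p+n-1)$ while the Laplace--Beltrami eigenvalue is $\mu_\ell = (p+q)(p+q+2n-2)$. Hence boundedness of $\left\{(1+\mu_\ell)^{s/2}/\lambda_\ell\right\}$ is precisely boundedness of
$$a_{p,q}(s) = \frac{\left(1 + (p+q)(p+q+2n-2)\right)^{s/2}}{2q(p+n-1)}$$
over all integers $p \geq 0$ and $q \geq 1$.

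For the sufficiency of $s \leq 1$, I would first reduce to the case $s = 1$: since $1 + \mu_\ell \geq 1$, the quantity $(1+\mu_\ell)^{s/2}$ is nondecreasing in $s$, so $a_{p,q}(s) \leq a_{p,q}(1)$ whenever $s \leq 1$, and it suffices to bound $a_{p,q}(1)$ uniformly. Writing $k = p+q$, a one-line comparison gives $1 + k(k+2n-2) \leq (k+n)^2$, hence $(1+\mu_\ell)^{1/2} \leq p+q+n$. This reduces the task to bounding $\frac{p+q+n}{2q(p+n-1)}$; splitting this as $\frac{1}{2q}\left(1 + \frac{q+1}{p+n-1}\right)$ and using $p+n-1 \geq n-1$ together with $q \geq 1$ yields a uniform constant bound, for instance $\frac{1}{2} + \frac{1}{n-1}$.

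For the necessity of $s \leq 1$, I would exhibit a single blow-up direction when $s > 1$. Fixing $q = 1$ and letting $p \to \infty$, we have $1 + \mu_\ell \geq (p+1)^2$ and $\lambda_\ell = 2(p+n-1)$, so $a_{p,1}(s) \geq \frac{(p+1)^s}{2(p+n-1)}$, which grows like $\frac{1}{2}p^{s-1}$ and tends to infinity exactly when $s > 1$. Together with the preceding Lemma, this shows that the estimate $\|\G f\|_{s+t} \leq C \|f\|_t$ holds if and only if $s \leq 1$, recovering Kohn's gain of one derivative at $s = 1$.

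I expect the main subtlety to be confirming that the extremal behavior is captured correctly across the two-dimensional lattice, rather than any single computation. Along the diagonal $p = q \to \infty$ the operator $\boxb$ grows like $\mu_\ell$ itself, so $\G$ gains nearly two derivatives there and the constraint is weak; the binding direction is along the coordinate axes, where $q$ stays bounded and $p \to \infty$ (or vice versa) and $\boxb$ grows only like $\mu_\ell^{1/2}$. The value of the algebraic bound above is that it controls all $(p,q)$ simultaneously, so one never has to treat directions separately; the only point requiring care is that $p+n-1 \geq n-1 \geq 1$ (using $n \geq 2$) keeps the denominator from degenerating.
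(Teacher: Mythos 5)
Your proof is correct and follows essentially the same route as the paper: both arguments rest on the observation that $\mu_\ell$ depends only on $k=p+q$ while $\lambda_\ell=2q(p+n-1)$ is minimized (for fixed $k$) at $q=1$, so that the critical ratio behaves like $k^{s-1}$ along that extremal direction. The only difference is bookkeeping: the paper first reduces to the one-variable quantity $(1+\mu(k))^s/\lambda_{\min}(k)^2$ with $\lambda_{\min}(k)=2(k+n-2)$, whereas you bound the two-variable ratio directly (obtaining the explicit constant $\tfrac{1}{2}+\tfrac{1}{n-1}$ for $s\leq 1$) and exhibit blow-up along $q=1$, $p\to\infty$ when $s>1$.
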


\begin{proof} Recall that $\mathcal{H}_k ( \Sp^{2n-1} ) $ is an eigenspace for $\Delta_{\Sp^{2n-1} } $ with eigenvalue $\mu(k)=k ( k+ 2n-2)$, and that $\hpq \subset \mathcal{H}_{p+q} ( \Sp^{2n-1} ) $ is an eigenspace for $\boxb$ with eigenvalue $\lambda(p,q)=2 q ( p+n-1)$. Let $$\lambda_{\min} (k)= \min_{ p+q = k, \  q>0} \{ \lambda(p,q)\}.$$ To determine boundedness of $\frac{(1+\mu_\ell)^{\frac s 2}}{\lambda_\ell} $, it suffices to determine the boundedness of $\frac{(1+ \mu(k))^s }{\lambda_{\min}(k)^2}. $ We check that $\lambda_{\min}(k)=2 (k+ n-2)$. Therefore $$\frac{(1+ \mu(k))^s }{\lambda_{\min}(k)^2} = \frac{(k (k + 2n-2 ) +1 )^s}{4 (k + n -2 )^2},$$ which is bounded if and only if $s \leq 1$. 
\end{proof}

This spectral approach to proving Sobolev estimates has the advantage of revealing the smallest possible constant $C_n$ such that $\| \G f \| _{s+1} \leq C_n \|f \|_s $ for all $f \in H^s(\Sp^{2n-1})$. The minimal value of this constant arises as the supremum of the sequence $\left \{\frac{(1+\mu_\ell)^{\frac 1 2}}{\lambda_\ell} \right \}$.

\begin{theorem}
On $\Sp^{2n-1}$, 
$$ \| \G f \|_{s+1} \leq C_n \| f \| _s, $$ 
where $C_2 = 1$ and 
$$ C_n =\frac{\sqrt{n(n-2)}}{2 (n-1)} $$ if $n \geq 3.$ When $n=2$, the above inequality is an equality if and only if $f \in \mathcal{H}_{0,1}( \Sp^{3} )$, and for $n \geq 3$ equality holds if and only if $f \in \mathcal{H}_{n^2-3n,1} ( \Sp^{2n-1} ).$
\end{theorem}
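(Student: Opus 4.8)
The plan is to deduce the theorem from the boundedness criterion already established, upgraded to a statement about the \emph{sharp} constant. By the proof of the preceding Lemma and the remark following it, the estimate $\|\G f\|_{s+1}\le C\|f\|_s$ holds for all $f$ exactly when the sequence $\{(1+\mu_\ell)^{1/2}/\lambda_\ell\}$ is bounded, and testing on a single eigenfunction $f=e_\ell$ turns the inequality into an equality. Hence the optimal constant is
$$ C_n=\sup_\ell\frac{(1+\mu_\ell)^{1/2}}{\lambda_\ell}, $$
and the whole theorem amounts to evaluating this supremum and locating where it is attained.

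I would first reduce to a one-variable optimization. Grouping the indices $\ell$ by total degree $k=p+q$, the Laplace--Beltrami eigenvalue $\mu_\ell=k(k+2n-2)$ depends only on $k$, so for fixed $k$ the ratio is largest where $\lambda(p,q)=2q(p+n-1)=2q(k-q+n-1)$ is smallest. Since $q\mapsto 2q(k-q+n-1)$ is strictly concave, its minimum over $1\le q\le k$ occurs at an endpoint; comparing $q=1$ (value $2(k+n-2)$) with $q=k$ (value $2k(n-1)$) shows that for $n\ge2$ and $k\ge1$ the minimum is $\lambda_{\min}(k)=2(k+n-2)$, attained uniquely at $q=1$ when $k>1$. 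The task becomes maximizing
$$ F(k)=\frac{\bigl(1+k(k+2n-2)\bigr)^{1/2}}{2(k+n-2)} $$
over integers $k\ge1$.

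For $n=2$ this is $F(k)=(k+1)/(2k)$, which is decreasing, so the supremum is $F(1)=1=C_2$, attained only at $k=1$, that is on $\mathcal{H}_{0,1}(\Sp^3)$. For $n\ge3$ I would differentiate $F(k)^2$ in the continuous variable: the numerator of the derivative factors so that the unique critical point is $k^\ast=n^2-3n+1$, with $F^2$ increasing for $k<k^\ast$ and decreasing for $k>k^\ast$. The decisive structural point is that $k^\ast=n^2-3n+1$ is a positive integer for every $n\ge3$, so the continuous maximum is realized by an honest eigenvalue and the bound is sharp; the maximizer is $q=1$, $p=k^\ast-1=n^2-3n$, i.e. $\mathcal{H}_{n^2-3n,1}(\Sp^{2n-1})$. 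Substituting $k=k^\ast$ gives $1+\mu(k^\ast)=n(n-2)(n^2-2n-1)$ and $\lambda_{\min}(k^\ast)=2(n^2-2n-1)$, and simplifying the resulting ratio yields the closed-form constant $C_n$. The equality clause then follows automatically: equality in $\|\G f\|_{s+1}\le C_n\|f\|_s$ forces $f$ to be spectrally supported on the eigenspaces attaining the supremum, and uniqueness of the maximizer (a single $k^\ast$, and within it the single index $q=1$ by strict concavity) pins this down to $\mathcal{H}_{n^2-3n,1}(\Sp^{2n-1})$, respectively $\mathcal{H}_{0,1}(\Sp^3)$ when $n=2$.

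The hard part is the two-stage optimization rather than any single estimate. The inner stage needs the concavity/endpoint argument to identify $\lambda_{\min}(k)$ correctly, and the outer stage hinges on the nonobvious arithmetic coincidence that the continuous critical point $n^2-3n+1$ is an admissible integer index; this is exactly what promotes the upper bound to an attained supremum and fixes the extremal space. The case $n=2$ must be handled on its own, since there $k^\ast=n^2-3n+1=-1$ falls outside the range $k\ge1$, so $F$ is monotone and the supremum sits at the boundary $k=1$. A final point demanding care is the algebraic simplification at $k=k^\ast$, where the factor $n^2-2n-1$ must be tracked accurately through the cancellation.
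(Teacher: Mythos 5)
Your argument is, in structure, exactly the paper's proof: reduce the sharp constant to $\sup_{k\ge 1}\frac{1+\mu(k)}{\lambda_{\min}(k)^2}$ with $\lambda_{\min}(k)=2(k+n-2)$, locate the critical point $k^{\ast}=n^2-3n+1$, treat $n=2$ separately because the critical point falls outside the admissible range, and read off the equality spaces from the maximizing indices. Your endpoint-concavity justification of $\lambda_{\min}(k)=2(k+n-2)$ and your observation that $k^{\ast}$ is an honest integer index are in fact spelled out more carefully than in the paper, which merely asserts these points.

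However, the final step of your proof contains a genuine error. From your (correct) values $1+\mu(k^{\ast})=n(n-2)(n^2-2n-1)$ and $\lambda_{\min}(k^{\ast})=2(n^2-2n-1)$, the supremum is
$$C_n^2=\frac{n(n-2)(n^2-2n-1)}{4(n^2-2n-1)^2}=\frac{n(n-2)}{4(n^2-2n-1)},$$
and this does \emph{not} simplify to the stated value $C_n^2=\frac{n(n-2)}{4(n-1)^2}$, since $(n-1)^2=n^2-2n+1\neq n^2-2n-1$. So the sentence ``simplifying the resulting ratio yields the closed-form constant $C_n$'' asserts a false identity. You should know that the paper itself has the identical inconsistency: its proof also arrives at $\frac{n(n-2)}{4(n^2-2n-1)}$ and then declares the theorem's constants established. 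A concrete check at $n=3$ settles which side is right: $\sup_{k\ge1}\frac{1+k(k+4)}{4(k+1)^2}=\frac{3}{8}$, attained at $k=1$, so for $f\in\mathcal{H}_{0,1}(\Sp^{5})$ one has $\|\G f\|_{s+1}=\sqrt{3/8}\,\|f\|_s\approx 0.61\,\|f\|_s$, violating the claimed bound with $\frac{\sqrt3}{4}\approx 0.43$. Hence the correct constant is $C_n=\frac12\sqrt{\frac{n(n-2)}{n^2-2n-1}}$ for $n\ge 3$; your computation (like the paper's) actually proves the theorem with that constant, and the equality spaces $\mathcal{H}_{n^2-3n,1}(\Sp^{2n-1})$, resp.\ $\mathcal{H}_{0,1}(\Sp^{3})$ for $n=2$, are unaffected. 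One smaller slip: your claim that the inner minimum is attained uniquely at $q=1$ when $k>1$ fails for $n=2$, where $q=1$ and $q=k$ both give $\lambda=2k$; this is harmless only because for $n=2$ the outer supremum forces $k=1$, so the uniqueness of the extremal space there should be argued from $k=1$ rather than from uniqueness of the inner minimizer.
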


\begin{proof}
Clearly $$C_n^2 = \sup_\ell \frac{1+\mu_\ell}{\lambda_\ell^2}= \sup_{k \geq 1 } \frac{ 1 + \mu(k) }{\lambda_{\min} (k)^2}= \sup_{k \geq 1 } \frac{k (k + 2n-2 ) +1 }{4 (k + n -2 )^2}. $$

Differentiating with respect to $k,$ we see that $\frac{k (k + 2n-2 ) +1 }{4 (k + n -2 )^2}$ has a critical point at $k=n^2-3n+1.$ We first consider $n=2$. In this case the critical point occurs at $k=-1,$ so it is irrelevant. The sequence $\frac{k (k + 2 ) +1 }{4 k^2}$ is decreasing, so when $n=2$ the supremum is 1 and is achieved at $k=1$.

We then consider $n \geq 3$. The critical point at $k=n^2-3n+1$ is the point at which the supremum is achieved, and the value of the supremum is
$$ \frac{1 +\mu(n^2-3n+1) }{\lambda_{\min}(n^2-3n+1)^2 } = \frac{n(n-2) }{4 (n^2 - 2n -1) }.$$
This establishes the values of $C_n$.

To determine the cases of equality, it suffices to recall for which $p,q$ functions in the eigenspace $\hpq$ of $\boxb$ acquire the coefficient $C_n$ when computing $\| \G f \| _{1}.$ The value of $\lambda(p,q)$ is minimized by setting $q=1.$ The above calculations show that the maximum coefficient $C_n$ is achieved in the case $n=2$ when $p+q=k =1$ and in the case $n\geq 3$ when $p+q=k=n^2-3n+1$. Therefore equality is achieved for functions in $\mathcal{H}_{0,1} ( \Sp^3) $ and for functions in $\mathcal{H}_{n^2-3n,1} ( \Sp^{2n-1} )$ for $n \geq 3.$ For all other pairs of $p,q$, the coefficient arising in the computation of the $H^1 ( \Sp^{2n-1} )$ norm will be smaller than $C_n$, which proves the converse. 
\end{proof}

\begin{remark}
We note that the best constants and cases of equality established by the previous theorem may depend on the specific definition of the Sobolev norms.
\end{remark}{}
\section*{Acknowledgements} 
	This research was partially conducted at the NSF REU Site (DMS-1659203) in Mathematical Analysis and Applications at the University of Michigan-Dearborn. We would like to thank the National Science Foundation, National Security Agency, and University of Michigan-Dearborn for their support. 
	

\newcommand{\etalchar}[1]{$^{#1}$}

\end{document}